\DeclareMathOperator{\Max}{Max}
\DeclareMathOperator{\Min}{Min}
\newtheorem{theorem}{Theorem}[section]
\newtheorem{definition}[theorem]{Definition}
\newtheorem{lemma}[theorem]{Lemma}
\newtheorem{remark}[theorem]{Remark}
\newtheorem{example}[theorem]{Example}
\title{Operators $\Max L$ and $\Min U$ and duals of Boolean posets}
\author{Ivan~Chajda, Miroslav~Kola\v r\'ik and Helmut~L\"anger}
\date{}
\begin{document}

\footnotetext{Support of the research of the first author by the Czech Science Foundation (GA\v CR), project 25-20013L, and by IGA, project P\v rF~2025~008, is gratefully acknowledged. Support of the research of the third author by the Austrian Science Fund (FWF), project 10.55776/PIN5424624, is gratefully acknowledged.}

\maketitle
	
\begin{abstract}
When working with posets which are not necessarily lattices, one has a lack of lattice operations which causes problems in algebraic constructions. This is the reason why we use the operators $\Max L$ and $\Min U$ substituting infimum and supremum, respectively. We axiomatize these operators. Two more operators, namely the so-called symmetric difference and the Sheffer operator, are introduced and studied in complemented posets by using the operators $\Max L$ and $\Min U$. In Boolean algebras, the symmetric difference is used to construct its dual structure, the corresponding unitary Boolean ring. By generalizing this idea, we assign to each Boolean poset a so-called dual and prove that also, conversely, a Boolean poset can be derived from its dual.
\end{abstract}
	
{\bf AMS Subject Classification:} 06A11, 06B75, 06C15, 06E75, 06E20
	
{\bf Keywords:} Poset, distributive poset, complemented poset, Boolean poset, operator $L$, operator $U$, operator $\Max L$, operator $\Min U$, Sheffer operator, symmetric difference, dual of a Boolean poset.

\section{Introduction}

When working with lattices, we use the lattice operations $\wedge$ and $\vee$, which for couples of entries $x,y$ yield their infimum and supremum, respectively. When working with posets, the results of these operations are not defined in some cases. Instead of these operations, in posets one usually uses the operators of the lower cone $L(x,y)$ and the upper cone $U(x,y)$, respectively, see e.g.\ \cite{CR}, \cite N and \cite P. Similarly as in lattices, in posets we have $z\le x,y$ for all $z\in L(x,y)$ and $x,y\le w$ for all $w\in U(x,y)$, respectively. However, in the case when $\inf(x,y)$ or $\sup(x,y)$ exists, it may happen that $\inf(x,y)\ne L(x,y)$ and $\sup(x,y)\ne U(x,y)$. This is the reason why we prefer to use the operators $\Max L(x,y)$ and $\Min U(x,y)$ denoting the set of all maximal elements of the lower cone of $x$ and $y$ and the set of all minimal elements in the upper cone of $x$ and $y$, respectively. Then we have that in case $\inf(x,y)$ exists, it coincides with $\Max L(x,y)$, and in case $\sup(x,y)$ exists, it coincides with $\Min U(x,y)$, respectively. Hence, the operators $\Max L(x,y)$ and $\Min U(x,y)$ are closer to lattice operations than the operators $L(x,y)$ and $U(x,y)$ and this advantage is used in some constructions (cf.\ e.g.\ [2]) where congruences in posets are defined by using these operators. We axiomatize the operators $\Max L$ and $\Min U$ in bounded posets. Obviously, this axiomatization differs much from that of lattice operations since the operators in question do not share such nice properties as e.g.\ associativity.

When working with Boolean algebras, one often uses the term operation symmetric difference. In the second part of the paper we introduce the operator of symmetric difference and the Sheffer operator also for complemented posets. It is well-known that in Boolean algebras the Sheffer operation turns out to be very useful since by means of this single operation all the operations of the Boolean algebra can be generated. We axiomatize the Sheffer operator on complemented posets and we show how this Sheffer operator can induce the partial order relation, the complementation operation as well as the afore mentioned operators $\Max L$ and $\Min U$.

It is well-known that there is a natural one-to-one correspondence between Boolean algebras and unitary Boolean rings, see e.g.\ \cite B. This correspondence is realized by using the operation of symmetric difference which serves as the addition operation of the unitary Boolean ring. In the last part of the paper we establish a similar construction for Boolean posets by using the operator of symmetric difference.

\section{Basic concepts}

First we recall some more or less familiar concepts and then we formulate several results needed for our study.

Let $\mathbf P=(P,\le)$ be a poset, $a,b\in P$ and $A,B\subseteq P$. We say that $\mathbf P$ satisfies the {\em Ascending Chain Condition} ({\em ACC}, for short) or the {\em Descending Chain Condition} ({\em DCC}, for short) if $\mathbf P$ has no infinite ascending or descending chains, respectively. Let {\em $\Max A$} and {\em $\Min A$} denote the set of all maximal and minimal elements of $A$, respectively. If $\mathbf P$ satisfies the ACC or the DCC then for every non-empty subset $A$ of $P$ the set $\Max A$ or $\Min A$ is non-empty, respectively. We define
\begin{align*}
A\le B & \text{ if }x\le y\text{ for all }x\in A\text{ and all }y\in B, \\
  L(A) & :=\{x\in P\mid x\le A\}, \\
  U(A) & :=\{x\in P\mid A\le x\}.
\end{align*}
Here and in the following we identify singletons with their unique element. Instead of $L(\{a\})$, $L(\{a,b\})$, $L(A\cup\{a\})$, $L(A\cup B)$ and $L\big(U(A)\big)$ we simply write $L(a)$, $L(a,b)$, $L(A,a)$, $L(A,B)$ and $LU(A)$. Analogously we proceed in similar cases. 

Hence, by the binary operator $\Max L(x,y)$ and $\Min U(x,y)$ we mean the mapping assigning to each $(x,y)\in P^2$ the set of all maximal elements in the lower cone $L(x,y)$ of $x$ and $y$ and the set of all minimal elements in the upper cone $U(x,y)$ of $x$ and $y$, respectively. Of course, if the poset in question is a lattice then $\Max L(x,y)=x\wedge y$ and $\Min U(x,y)=x\vee y$. Thus also in the more general case of a poset, $\Max L(x,y)$  and $\Min U(x,y)$ should play the role of $x\wedge y$ and $x\vee y$, respectively.

The {\em poset} $\mathbf P$ is called {\em bounded} if it has a bottom element $0$ and a top element $1$. In this case we sometimes write $(P,\le,0,1)$.

Now we present our basic results.

\begin{lemma}\label{lem2}
Let $\mathbf P=(P,\le)$ be a poset satisfying the {\rm ACC} and the {\rm DCC} and let $A,B\subseteq P$. Then the following holds:
\begin{enumerate}[{\rm(i)}]
\item If $a\in A$ then there exists some $b\in\Min A$ and some $c\in\Max A$ with $b\le a\le c$,
\item $L(A)=L(B)$ if and only if $\Max L(A)=\Max L(B)$,
\item $U(A)=U(B)$ if and only if $\Min U(A)=\Min U(B)$,
\item $L(\Min A)=L(A)$ and $U(\Max A)=U(A)$.
\end{enumerate}
\end{lemma}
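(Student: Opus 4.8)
The plan is to establish (i) first, since the remaining three items will all follow from it together with the elementary antitonicity of the cone operators. For (i), fix $a\in A$ and consider the subset $A_{\ge a}:=\{x\in A\mid a\le x\}$. It is non-empty, as $a\in A_{\ge a}$, and the ACC guarantees (by the fact recalled just before the lemma) that $\Max A_{\ge a}\ne\emptyset$; pick any $c$ in it. Then $a\le c$, and $c$ is in fact maximal in all of $A$: any $d\in A$ with $c\le d$ satisfies $a\le c\le d$, hence $d\in A_{\ge a}$, forcing $d=c$. Thus $c\in\Max A$ with $a\le c$. The element $b\in\Min A$ with $b\le a$ is obtained dually, from the DCC applied to $\{x\in A\mid x\le a\}$.

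For (ii) and (iii), the forward implications are immediate, since equal sets have equal sets of maximal (resp.\ minimal) elements. For the converse of (ii), assume $\Max L(A)=\Max L(B)$ and take any $x\in L(A)$. Applying (i) to the set $L(A)$ yields some $c\in\Max L(A)$ with $x\le c$. Since $\Max L(A)=\Max L(B)\subseteq L(B)$, we get $c\in L(B)$, i.e.\ $c\le B$; combined with $x\le c$ this gives $x\le B$, so $x\in L(B)$. Hence $L(A)\subseteq L(B)$, and the reverse inclusion follows by symmetry. The proof of (iii) is the order dual, using $\Min U$, $U$, and (i) applied to $U(A)$.

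For (iv), I would first record that $L$ is antitone: if $C\subseteq D$ then $L(D)\subseteq L(C)$. Since $\Min A\subseteq A$, this gives $L(A)\subseteq L(\Min A)$. Conversely, let $x\in L(\Min A)$ and take any $a\in A$; by (i) there is some $b\in\Min A$ with $b\le a$, and then $x\le b\le a$ shows $x\le a$. As $a\in A$ was arbitrary, $x\in L(A)$, giving $L(\Min A)\subseteq L(A)$ and hence equality. The identity $U(\Max A)=U(A)$ is proved dually.

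The argument is essentially routine once (i) is in hand; the only point requiring a little care is the observation in (i) that a maximal element of the restricted set $A_{\ge a}$ is automatically maximal in $A$, which is exactly what upgrades ``there is a maximal element'' to ``there is a maximal element above $a$''. Everything else reduces to the antitonicity of the cone operators and the interplay between a set and its extremal elements furnished by (i).
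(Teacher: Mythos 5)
Your proof is correct, and for items (ii)--(iv) it is essentially identical to the paper's argument: the forward implications are noted to be trivial, the nontrivial inclusions are deduced from (i) together with the fact that lower and upper cones are down- and up-sets, and (iii) and the second half of (iv) are handled by duality. The only genuine difference is in (i). The paper proves it by an explicit chain construction: starting from $a_0:=a$ it repeatedly picks $a_{k+1}<a_k$ in $A$ as long as the current element is not minimal, and invokes the DCC to force termination at some $a_k\in\Min A$ with $a_k\le a$ (the maximal case being dual). You instead apply the fact recalled just before the lemma (that the ACC, resp.\ DCC, makes $\Max$, resp.\ $\Min$, of any non-empty subset non-empty) to the restricted set $A_{\ge a}:=\{x\in A\mid a\le x\}$, and then upgrade: a maximal element $c$ of $A_{\ge a}$ is maximal in all of $A$, since any $d\in A$ with $c\le d$ lies in $A_{\ge a}$ and hence equals $c$. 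Both routes are sound. Yours is shorter because it reuses the stated fact rather than re-deriving it --- the paper's chain argument is in effect a proof of that very fact with the extra conclusion $b\le a$ built in --- whereas the paper's version is self-contained and does not lean on the unproved remark. Your explicit justification of the upgrade step is precisely the point requiring care, and you handle it correctly.
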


\begin{proof}
\
\begin{enumerate}[(i)]
\item Assume $a\in A$. Put $a_0:=a$. If $a_0\in\Min A$ then put $b:=a_0$. Otherwise there exists some $a_1\in A$ with $a_1<a_0$. If $a_1\in\Min A$ then put $b:=a_1$. Otherwise there exists some $a_2\in A$ with $a_2<a_1$. Since $\mathbf P$ satisfies the DCC this procedure must stop after a finite number of steps. Hence there exists some $k\ge0$ with $b=a_k\in\Min A$ and $b\le a$. The second assertion is dual.
\item If $L(A)=L(B)$ then $\Max L(A)=\Max L(B)$. Conversely, assume $\Max L(A)=\Max L(B)$ and $a\in L(A)$. According to (i) there exists some $b\in\Max L(A)$ with $a\le b$. Since $\Max L(A)=\Max L(B)$ we conclude $b\in\Max L(B)\subseteq L(B)$. This shows $L(A)\subseteq L(B)$. By symmetry we have $L(B)\subseteq L(A)$ and hence $L(A)=L(B)$,
\item This is dual to (ii).
\item Since $\Min A\subseteq A$ we have $L(A)\subseteq L(\Min A)$. Now assume $d\in L(\Min A)$ and $e\in A$. According to (i) there exists some $f\in\Min A$ with $f\le e$. Now $d\le f\le e$ and hence $d\le e$ showing $d\in L(A)$. This proves $L(\Min A)\subseteq L(A)$. Together we obtain $L(\Min A)=L(A)$. The second assertion is dual.
\end{enumerate}	
\end{proof}

It is evident that for any subset A of a bounded poset $(P,\le,0,1)$ the sets $L(A)$, $U(A)$, $\Max L(A)$ and $\Min U(A)$ are non-empty since $0\in L(A)$ and $1\in U(A)$.

Two elements $a$ and $b$ of a bounded poset $(P,\le,0,1)$ are called {\em complements} of each other if $L(a,b)=0$ and $U(a,b)=1$.

A unary operation $'$ on a poset $(P,\le)$ is called
\begin{itemize}
\item {\em antitone} if $x,y\in P$ and $x\le y$ together imply $y'\le x'$,
\item an {\em involution} if it satisfies the identity $(x')'\approx x$.
\end{itemize}
For a bounded poset $\mathbf P=(P,\le,{}',0,1)$ with an antitone involution $'$ we call this mapping $'$ a {\em complementation} and $\mathbf P$ a {\em complemented poset} if $x'$ is a complement of $x$ for every $x\in P$.

By a {\em binary operator} $\varphi$ on a set $P$ we mean a mapping from $P^2$ to $2^P$, i.e.\ to every couple of elements $x,y$ of $P$ there is assigned a subset $\varphi(x,y)$ of $P$. In the following such an operator is often extended from $P^2$ to $(2^P)^2$, i.e.\ to every couple of subsets $A,B$ of $P$ there is assigned a subset $\varphi(A,B)$ of $P$. In this case, elements of $P$ are identified with the corresponding one-element subsets of $P$.

\section{Axiomatization of the operators $\Max L$ and $\Min U$}

In this section we investigate the operators $\Max L$ and $\Min U$ on a poset. The aim of this section is to axiomatize these operators.

\begin{definition}\label{def1}
An {\em operator structure} is an ordered quintuple $(P,\sqcup,\sqcap,0,1)$ where $\sqcup$ and $\sqcap$ are binary operators of $P$ and $0,1\in P$ and the following conditions are satisfied:
\begin{enumerate}[{\rm(i)}]
\item $x\sqcup x\approx x$ and $x\sqcap x\approx x$,
\item $x\sqcup y\approx y\sqcup x$ and $x\sqcap y\approx y\sqcap x$,
\item $0\sqcup x\approx x$ and $x\sqcap1=x$,
\item $x\sqcup\big((x\sqcup y)\sqcup z\big)\approx0\sqcup\big((x\sqcup y)\sqcup z\big)$ and $x\sqcap\big((x\sqcap y)\sqcap z\big)\approx\big((x\sqcap y)\sqcap z\big)\sqcap1$,
\item $(x\sqcap y)\sqcup y\approx y$ and $x\sqcap(x\sqcup y)\approx x$,
\item $z\in x\sqcup y$ if and only if $x\sqcup z=y\sqcup z=z$ and if $u\in P$ and $x\sqcup u=y\sqcup u=u\sqcap z=u$ together imply $u=z$; $z\in x\sqcap y$ if and only if $z\sqcap x=z\sqcap y=z$ and if $u\in P$ and $u\sqcap x=u\sqcap y=z\sqcup u=u$ together imply $u=z$.
\end{enumerate}
\end{definition}

The conditions (i) -- (vi) are not independent. Observe that (i) and (iv) imply (iii) and that (iii) and (v) imply (i).

In what follows, we show that these operator structures can be used for an axiomatization of the operators $\Max L$ and $\Min U$.

\begin{theorem}\label{th2}
Let $\mathbf P=(P,\le,0,1)$ be a bounded poset satisfying the {\rm ACC} and the {\rm DCC}. Then $\mathbb A(\mathbf P):=(P,\Min U,\Max L,0,1)$ is an operator structure.	
\end{theorem}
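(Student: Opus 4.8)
The plan is to verify directly that the six conditions of Definition~\ref{def1} hold for the quintuple $(P,\Min U,\Max L,0,1)$, reading each condition as a statement about lower and upper cones. The whole argument rests on translating the operator equations via two dictionary facts: for $x,z\in P$ one has $\Min U(x,z)=z$ if and only if $x\le z$ (since then $U(x,z)=U(z)$, whose least element is $z$), and dually $\Max L(u,z)=u$ if and only if $u\le z$. I would also record once, for use in the nested conditions, the following absorption fact: if $W\subseteq P$ is non-empty and $x\le w$ for every $w\in W$, then $U(\{x\}\cup W)=U(W)$ (pick any $w_0\in W$; then $U(W)\subseteq U(w_0)\subseteq U(x)$), and hence $\Min U(x,W)=\Min U(W)$; the order-dual statement holds for $\Max L$. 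Throughout, non-emptiness of all the sets $\Min U(\cdots)$ and $\Max L(\cdots)$ is guaranteed by boundedness together with the ACC and DCC, as already noted after Lemma~\ref{lem2}.

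Conditions (i)--(iii) are then immediate: $U(x,x)=U(x)$ gives $\Min U(x,x)=x$ and dually $\Max L(x,x)=x$; commutativity (ii) follows from $U(x,y)=U(y,x)$ and $L(x,y)=L(y,x)$; and (iii) follows from $U(0,x)=U(x)$ and $L(x,1)=L(x)$ (alternatively one may invoke the remark that (i) and (iv) imply (iii)).

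The main work is in conditions (iv) and (v), where the operators are nested and hence applied to subsets. Here I would first observe that every element of $\Min U(x,y)$ lies above both $x$ and $y$, and dually every element of $\Max L(x,y)$ lies below both $x$ and $y$. For (v), writing $M:=\Max L(x,y)$, each $a\in M$ satisfies $a\le y$, so the absorption fact gives $U(\{y\}\cup M)=U(y)$ and therefore $(x\sqcap y)\sqcup y=\Min U(M,y)=\Min U(y)=y$; the second identity and its $\sqcap$-dual are analogous. For (iv), set $W:=(x\sqcup y)\sqcup z$. Every $w\in W$ is an upper bound of $\Min U(x,y)$, so picking any $m\in\Min U(x,y)$ we get $x\le m\le w$, whence $x\le w$ for all $w\in W$; the absorption fact then yields $U(\{x\}\cup W)=U(W)=U(\{0\}\cup W)$ and hence $x\sqcup W=0\sqcup W$, which is exactly the first identity of (iv). The $\sqcap$-version is order-dual. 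I expect this bookkeeping with nested set-valued operators to be the principal obstacle, since one must be scrupulous about the convention $\Min U(A,z)=\Min\big(U(A\cup\{z\})\big)$ and resist the temptation to simplify $U(\Min U(x,y))$ to $U(x,y)$, which is false in general.

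Finally, for the biconditional (vi) I would read off each clause through the dictionary. Using $\Min U(x,z)=z$ iff $x\le z$, the premise $x\sqcup z=y\sqcup z=z$ says precisely that $z\in U(x,y)$; and using $\Max L(u,z)=u$ iff $u\le z$, the implication ``$x\sqcup u=y\sqcup u=u\sqcap z=u$ imply $u=z$'' says precisely that any $u\in U(x,y)$ with $u\le z$ equals $z$, i.e.\ that $z$ is a minimal element of $U(x,y)$. Together these two clauses are the definition of $z\in\Min U(x,y)$, so the stated equivalence holds; the clause for $\sqcap$ is the order-dual and characterises $z\in\Max L(x,y)$. This would complete the verification of all six axioms.
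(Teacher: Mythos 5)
Your proof is correct and takes essentially the same route as the paper: direct verification of axioms (i)--(vi) via cone identities, with non-emptiness (from boundedness, the ACC and the DCC) feeding the absorption argument in (iv) and the unpacking of minimality in (vi). The only cosmetic difference is in (v), where you absorb $\Max L(x,y)$ into $U(y)$ directly, whereas the paper first replaces $\Max L(x,y)$ by $L(x,y)$ using Lemma~\ref{lem2}; the substance is identical.
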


\begin{proof}
Let $a,b,c\in P$. We check the conditions (i) -- (vi) of Definition~\ref{def1}.
\begin{enumerate}[(i)]
\item $\Min U(a,a)=a$ and $\Max L(a,a)=a$,
\item $\Min U(a,b)=\Min U(b,a)$ and $\Max L(a,b)=\Max L(b,a)$,
\item $\Min U(0,a)=a$ and $\Max L(a,1)=a$,
\item We have $a\le u\le v$ for all $u\in\Min U(a,b)$ and all $v\in\Min U\big(\Min U(a,b),c\big)$. Since $\Min U(a,b)\ne\emptyset$ according to the remark after Lemma~\ref{lem2}, we conclude $a\le v$ for all $v\in\Min U\big(\Min U(a,b),c\big)$. Because of $\Min U\big(\Min U(a,b),c\big)\ne\emptyset$ according to the remark after Lemma~\ref{lem2} we get
\begin{align*}
\Min U\Big(a,\Min U\big(\Min U(a,b),c\big)\Big) & =\Min U\Big(\Min U\big(\Min U(a,b),c\big)\Big)= \\
                                                & =\Min U\Big(0,\Min U\big(\Min U(a,b),c\big)\Big).
\end{align*}
The second identity follows by duality.
\item According to Lemma~\ref{lem2} we have
\[
\Min U\big(\Max L(a,b),b\big)=\Min U\big(L(a,b),b\big)=\Min U(b)\approx b.
\]
The second identity follows by duality.
\item We have $c\in\Min U(a,b)$ if and only if $c\in U(a,b)$ and if $c\ge d\in U(a,b)$ implies $c=d$.
\end{enumerate}
\end{proof}

Also conversely, we show how to obtain a bounded poset with the operators $\Max L$ and $\Min U$ from a given operator structure.

\begin{theorem}\label{th3}
Let $\mathbf A=(P,\sqcup,\sqcap,0,1)$ be an operator structure. Define a binary relation $\le$ on $P$ by $x\le y$ if $x\sqcup y=y$ {\rm(}$x,y\in P${\rm)}. Then $\mathbb P(\mathbf A):=(P,\le,0,1)$ is a bounded poset satisfying the following conditions:
\begin{enumerate}[{\rm(a)}]
\item $x\le y$ if and only if $x\sqcap y=x$,
\item $x\sqcup y\approx\Min U(x,y)$,
\item $x\sqcap y\approx\Max L(x,y)$.
\end{enumerate}
\end{theorem}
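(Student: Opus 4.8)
The plan is to verify in turn that $\le$ is a partial order, that it is bounded by $0$ and $1$, and that $\sqcup$ and $\sqcap$ recover $\Min U$ and $\Max L$; throughout one must track the identification of an element with the corresponding singleton, since $x\sqcup y$ and $x\sqcap y$ are in general subsets of $P$. First I would show $\le$ is a partial order. Reflexivity $x\le x$ is immediate from the idempotency $x\sqcup x\approx x$ in (i). Antisymmetry follows from commutativity (ii): if $x\sqcup y=y$ and $y\sqcup x=x$, then $y=x\sqcup y=y\sqcup x=x$. Transitivity is the step where the genuinely non-lattice axiom (iv) enters: assuming $x\le y\le z$, so $x\sqcup y=y$ and $y\sqcup z=z$, one computes $(x\sqcup y)\sqcup z=y\sqcup z=z$, and substituting into (iv) gives $x\sqcup z=x\sqcup\big((x\sqcup y)\sqcup z\big)=0\sqcup\big((x\sqcup y)\sqcup z\big)=0\sqcup z=z$ by (iii), whence $x\le z$.

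Next I would prove (a) directly from the absorption laws (v). If $x\le y$, i.e.\ $x\sqcup y=y$, then $x\sqcap y=x\sqcap(x\sqcup y)\approx x$ by the second identity in (v); conversely, if $x\sqcap y=x$, then $x\sqcup y=(x\sqcap y)\sqcup y\approx y$ by the first identity in (v), so $x\le y$. With (a) in hand, boundedness is quick: $0\le x$ holds because $0\sqcup x\approx x$ by (iii), and $x\le1$ holds because $x\sqcap1=x$ by (iii) together with (a). Thus $\mathbb P(\mathbf A)$ is a bounded poset.

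Finally I would derive (b) and (c) by reading condition (vi) through the translations just obtained. Using $x\sqcup u=u\Leftrightarrow x\le u$ and $u\sqcap z=u\Leftrightarrow u\le z$ (the latter by (a)), the first half of (vi) says precisely that $z\in x\sqcup y$ iff $x,y\le z$ (so $z\in U(x,y)$) and every $u\in U(x,y)$ with $u\le z$ satisfies $u=z$ (so $z$ is minimal in $U(x,y)$); that is, $z\in x\sqcup y\Leftrightarrow z\in\Min U(x,y)$, giving (b). The second half of (vi) translates dually into $z\in x\sqcap y\Leftrightarrow z\in\Max L(x,y)$, giving (c).

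I expect transitivity to be the main obstacle: it is the only place where the substitute-for-associativity axiom (iv) does real work, and the computation $(x\sqcup y)\sqcup z=z$ requires care because it manipulates the operator result $x\sqcup y$ as the singleton $\{y\}$ under the standing identification of elements with singletons. Once the order is in place, (a)--(c) are essentially a matter of unwinding the definitions, with (vi) engineered so that it literally encodes minimality in $U(x,y)$ and maximality in $L(x,y)$.
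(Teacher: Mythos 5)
Your proof is correct and follows essentially the same route as the paper: reflexivity and antisymmetry from (i) and (ii), transitivity via the substitution trick with (iii) and (iv), condition (a) from the absorption laws (v), boundedness from (iii) and (a), and (b)--(c) by translating (vi) into minimality in $U(x,y)$ and maximality in $L(x,y)$. You merely spell out some steps (notably the unwinding of (vi) and the two directions of (a)) that the paper leaves implicit.
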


\begin{proof}
Let $a,b,c\in P$. We use the conditions of Definition~\ref{def1}. According to (i), $\le$ is reflexive. If $a\le b$ and $b\le a$ then $a=b\sqcup a=a\sqcup b=b$ by (ii) proving antisymmetry of $\le$. If $a\le b$ and $b\le c$ then
\[
a\sqcup c=a\sqcup(b\sqcup c)=a\sqcup\big((a\sqcup b)\sqcup c)=0\sqcup\big((a\sqcup b)\sqcup c\big)=0\sqcup(b\sqcup c)=0\sqcup c=c
\]
according to (iii) and (iv) proving transitivity of $\le$. Altogether, $(P,\le)$ is a poset.
Because of (iii), $0$ is the bottom element of $(P,\le)$.
\begin{enumerate}[(a)]
\item From (v) we conclude that $a\sqcup b=b$ if and only if $a\sqcap b=a$. Due to (iii), $1$ is the top element of $(P,\le)$.
\item and (c) follow from (vi).
\end{enumerate}
\end{proof}

That the correspondence described in Theorems~\ref{th2} and \ref{th3} is nearly one-to-one is proved in the next theorem.

\begin{theorem}
\
\begin{enumerate}[{\rm(i)}]
\item If $\mathbf P=(P,\le,0,1)$ is a bounded poset satisfying the {\rm ACC} and the {\rm DCC} then $\mathbb P\big(\mathbb A(\mathbf P)\big)=\mathbf P$.
\item If $\mathbf A=(P,\sqcup,\sqcap,0,1)$ is a finite operator structure and $\mathbb A\big(\mathbb P(\mathbf A)\big)=(P,\sqcup_1,\sqcap_1,0,1)$ then $x\sqcup_1y=x\sqcup y$ and $x\sqcap_1y=x\sqcap y$ for all $x,y\in P$.
\end{enumerate}
\end{theorem}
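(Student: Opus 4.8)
The plan is to treat the two directions separately, with each following quickly from the correspondence already established in Theorems~\ref{th2} and \ref{th3}.

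For part (i), I start from a bounded poset $\mathbf P=(P,\le,0,1)$ satisfying the ACC and the DCC. By Theorem~\ref{th2}, $\mathbb A(\mathbf P)=(P,\Min U,\Max L,0,1)$ is an operator structure, and applying $\mathbb P$ to it produces a poset $(P,\le_1,0,1)$ where, by the recipe of Theorem~\ref{th3}, $x\le_1y$ holds exactly when $\Min U(x,y)=y$. The whole of part (i) then reduces to verifying that $\le_1$ coincides with the original $\le$; the underlying set and the bounds $0,1$ are carried over unchanged by construction. I would prove the equivalence $x\le y\iff\Min U(x,y)=\{y\}$ directly: if $x\le y$ then every upper bound of $\{x,y\}$ is an upper bound of $y$ and conversely, so $U(x,y)=U(y)$ and hence $\Min U(x,y)=\Min U(y)=\{y\}$; conversely, if $\Min U(x,y)=\{y\}$ then $y\in U(x,y)$, which forces $x\le y$. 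This yields $\le_1\,=\,\le$ and therefore $\mathbb P\big(\mathbb A(\mathbf P)\big)=\mathbf P$.

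For part (ii), I begin with a finite operator structure $\mathbf A=(P,\sqcup,\sqcap,0,1)$. Finiteness is invoked once, right at the start: the poset $\mathbb P(\mathbf A)=(P,\le,0,1)$ is finite and hence satisfies the ACC and the DCC, so that $\mathbb A$ may legitimately be applied to it. By Theorem~\ref{th3}(b),(c), the operators of $\mathbf A$ are recovered on the poset $\mathbb P(\mathbf A)$ as $x\sqcup y=\Min U(x,y)$ and $x\sqcap y=\Max L(x,y)$, where $\Min U$ and $\Max L$ refer to the order $\le$ of $\mathbb P(\mathbf A)$. On the other hand, by definition $\mathbb A\big(\mathbb P(\mathbf A)\big)=(P,\Min U,\Max L,0,1)$, i.e.\ $\sqcup_1=\Min U$ and $\sqcap_1=\Max L$ with respect to that same order. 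Comparing the two descriptions gives $x\sqcup_1y=\Min U(x,y)=x\sqcup y$ and $x\sqcap_1y=\Max L(x,y)=x\sqcap y$ for all $x,y\in P$, which is exactly the claim.

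Neither direction presents a serious obstacle once the earlier theorems are in hand; the content is essentially bookkeeping about which order and which operators are being computed. The only point requiring genuine (if short) verification is the order-recovery equivalence $x\le y\iff\Min U(x,y)=\{y\}$ in part (i), and the only subtlety in part (ii) is noticing that finiteness is needed solely to guarantee the ACC and the DCC so that Theorem~\ref{th2} applies to $\mathbb P(\mathbf A)$; the equality of the operators is then immediate from Theorem~\ref{th3}.
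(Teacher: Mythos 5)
Your proposal is correct and follows essentially the same route as the paper: part (i) is the chain of equivalences $a\sqsubseteq b\iff\Min U(a,b)=b\iff a\le b$ (you merely spell out the verification that the paper leaves implicit), and part (ii) is the observation that finiteness yields the ACC and DCC, after which Theorem~\ref{th3}(b),(c) identifies both $\sqcup_1$ and $\sqcup$ with $\Min U$, and both $\sqcap_1$ and $\sqcap$ with $\Max L$, exactly as in the paper's proof.
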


\begin{proof}
\
\begin{enumerate}[(i)]
\item If $\mathbf P=(P,\le,0,1)$ is a bounded poset satisfying the {\rm ACC} and the {\rm DCC}, $\mathbb A(\mathbf P)=(P,\Min U,\Max L,0,1)$, $\mathbb P\big(\mathbb A(\mathbf P)\big)=(P,\sqsubseteq,0,1)$ and $a,b\in P$ then the following are equivalent: $a\sqsubseteq b$, $\Min U(a,b)=b$, $a\le b$.	
\item If $\mathbf A=(P,\sqcup,\sqcap,0,1)$ is a finite operator structure, $\mathbb P(\mathbf A)=(P,\le,0,1)$, $\mathbb A\big(\mathbb P(\mathbf A)\big)=(P,\sqcup_1,\sqcap_1,0,1)$ and $a,b\in P$ then
\begin{align*}
a\sqcup_1b & =\Min U(a,b)=a\sqcup b, \\
a\sqcap_1b & =\Max L(a,b)=a\sqcup b.
\end{align*}
\end{enumerate}
\end{proof}	

\begin{remark}
Contrary to the case of the lattice operations $\wedge$ and $\vee$, the binary operators $\Max L$ and $\Min U$ need not be associative, i.e.\ they need not satisfy the identities
\begin{align*}
\Max L\big(\Max L(x,y),z\big) & \approx\Max L\big(x,\Max L(y,z)\big), \\
\Min U\big(\Min U(x,y),z\big) & \approx\Min U\big(x,\Min U(y,z)\big).
\end{align*}
Consider the poset depicted in Fig.~1
	
\vspace*{-4mm}
	
\begin{center}
\setlength{\unitlength}{7mm}
\begin{picture}(6,6)
\put(2,1){\circle*{.3}}
\put(1,3){\circle*{.3}}
\put(3,3){\circle*{.3}}
\put(1,5){\circle*{.3}}
\put(3,5){\circle*{.3}}
\put(5,5){\circle*{.3}}
\put(1,3){\line(0,1)2}
\put(1,3){\line(1,1)2}
\put(1,3){\line(1,-2)1}
\put(3,3){\line(-1,-2)1}
\put(3,3){\line(-1,1)2}
\put(3,3){\line(0,1)2}
\put(3,3){\line(1,1)2}
\put(1.85,.3){$0$}
\put(.35,2.85){$a$}
\put(3.35,2.85){$b$}
\put(.35,4.85){$c$}
\put(3.35,4.85){$d$}
\put(5.35,4.85){$e$}
\put(1,-.75){{\rm Figure~1. Poset}}
\end{picture}
\end{center}
	
\vspace*{4mm}
	
Then
\[
\Max L\big(\Max L(c,d),e\big)=\Max L(a,b,e)=0\ne b=\Max L(c,b)=\Max L\big(c,\Max L(d,e)\big).
\]
Dually one can show that the operator $\Min U$ is not associative.
\end{remark}

\section{Symmetric difference and the Sheffer operator}

We continue our investigations by considering another operator on a complemented poset, namely the so-called {\em symmetric difference} which we will introduce by means of the operators $\Max L$ and $\Min U$.

Let $(P,\le,{}',0,1)$ be a complemented poset. For every subset $A$ of $P$ put $A':=\{a'\mid a\in A\}$. In a complemented lattice $(L,\vee,\wedge,{}',0,1)$ the so-called {\em symmetric difference} $+$ is usually defined as follows (see e.g.\ \cite B):
\[
x+y:=(x'\wedge y)\vee(x\wedge y')
\]
for all $x,y\in L$. A similar binary operator can be introduced in the complemented poset $(P,\le,{}',0,1)$ by
\[
x+y:=\Min U\big(\Max L(x',y),\Max L(x,y')\big)
\]
for all $x,y\in P$. We extend this definition from elements $x,y$ of $P$ to all subsets $A,B$ of $P$ by defining
\[
A+B:=\Min U\big(\Max L(A',B),\Max L(A,B')\big).
\]
Although the result of $x+y$ need not be an element of $P$, but may be a subset of $P$, the corresponding operator on complemented posets shares some familiarly known properties of the usual symmetric difference in lattices, in particular we have
\begin{align*}
& x+x\approx0, x+y\approx y+x, x+0\approx x, x+1\approx x', (x+1)+1\approx x, x+x'\approx1, \\
& x+y\approx x'+y', x+y'\approx x'+y.
\end{align*}
Recall that a poset $(P,\le)$ is called {\em distributive} if it satisfies one of the following equivalent conditions (see \cite N and \cite P):
\begin{align*}
L\big(U(x,y),z\big) & \approx LU\big(L(x,z),L(y,z)\big), \\	
U\big(L(x,y),z\big) & \approx UL\big(U(x,z),U(y,z)\big), \\
L\big(U(x,y),A\big) & \approx LU\big(L(x,A),L(y,A)\big), \\	
U\big(L(x,y),A\big) & \approx UL\big(U(x,A),U(y,A)\big)
\end{align*}
where $A$ runs through all subsets of $P$.

We demonstrate the concept of symmetric difference by the following example.

\begin{example}
Consider the poset visualized in Fig.~2

\vspace*{-4mm}

\begin{center}
\setlength{\unitlength}{7mm}
\begin{picture}(8,8)
\put(4,1){\circle*{.3}}
\put(1,3){\circle*{.3}}
\put(3,3){\circle*{.3}}
\put(5,3){\circle*{.3}}
\put(7,3){\circle*{.3}}
\put(1,5){\circle*{.3}}
\put(3,5){\circle*{.3}}
\put(5,5){\circle*{.3}}
\put(7,5){\circle*{.3}}
\put(4,7){\circle*{.3}}
\put(4,1){\line(-3,2)3}
\put(4,1){\line(-1,2)1}
\put(4,1){\line(1,2)1}
\put(4,1){\line(3,2)3}
\put(4,7){\line(-3,-2)3}
\put(4,7){\line(-1,-2)1}
\put(4,7){\line(1,-2)1}
\put(4,7){\line(3,-2)3}
\put(1,3){\line(0,1)2}
\put(1,3){\line(1,1)2}
\put(1,3){\line(2,1)4}
\put(3,3){\line(-1,1)2}
\put(3,3){\line(2,1)4}
\put(5,3){\line(-2,1)4}
\put(5,3){\line(1,1)2}
\put(7,3){\line(-2,1)4}
\put(7,3){\line(-1,1)2}
\put(7,3){\line(0,1)2}
\put(3.85,.3){$0$}
\put(.35,2.85){$a$}
\put(2.35,2.85){$b$}
\put(5.4,2.85){$c$}
\put(7.4,2.85){$d$}
\put(.35,4.85){$d'$}
\put(2.35,4.85){$c'$}
\put(5.4,4.85){$b'$}
\put(7.4,4.85){$a'$}
\put(3.85,7.4){$1$}
\put(0,-.75){{\rm Figure~2. Complemented poset}}
\end{picture}
\end{center}

\vspace*{4mm}

This poset is complemented, but not distributive since
\[
L\big(U(a,b),c\big)=L(d',1,c)=L(c)\ne0=LU(0,0)=LU\big(L(a,c),L(b,c)\big).
\]
The table for the symmetric difference $+$ is as follows:
\newpage
\[
\begin{array}{l|llllllllll}
+      & 0  & a    & b    & c    & d    & a'   & b'   & c'   & d'   & 1 \\
\hline
0      & 0  & a    & b    & c    & d    & a'   & b'   & c'   & d'   & 1 \\
a      & a  & 0    & d'   & d'   & b'c' & 1    & d    & d    & a'd' & a' \\
b      & b  & d'   & 0    & 0    & a'   & d    & 1    & b'c' & a    & b' \\
c      & c  & d'   & 0    & 0    & a'   & d    & b'c' & 1    & a    & c' \\
d      & d  & b'c' & a'   & a'   & 0    & a'd' & a    & a    & 1    & d' \\
a'     & a' & 1    & d    & d    & a'd' & 0    & d'   & d'   & b'c' & a \\
b'     & b' & d    & 1    & b'c' & a    & d'   & 0    & 0    & a'   & b \\
c'     & c' & d    & b'c' & 1    & a    & d'   & 0    & 0    & a'   & c \\
d'     & d' & a'd' & a    & a    & 1    & b'c' & a'   & a'   & 0    & d \\
1      & 1  & a'   & b'   & c'   & d'   & a    & b    & c    & d    & 0
\end{array}
\]
\hspace*{71mm} {\rm Table~1} \\

\vspace*{-3mm}

Here and in the following we write $b'c'$ instead of $\{b',c'\}$ and so on.
\end{example}

Recall that a {\em Boolean poset} is a distributive complemented poset.

From now on we concentrate in Boolean posets.

Distributive posets were investigated in \cite{CR}, \cite{LR} and \cite P. For Boolean posets the reader is referred to \cite N.

Examples of Boolean posets that are not lattices are as follows:

\vspace*{-4mm}

\begin{center}
\setlength{\unitlength}{7mm}
\begin{picture}(8,8)
\put(4,1){\circle*{.3}}
\put(1,3){\circle*{.3}}
\put(3,3){\circle*{.3}}
\put(5,3){\circle*{.3}}
\put(7,3){\circle*{.3}}
\put(1,5){\circle*{.3}}
\put(3,5){\circle*{.3}}
\put(5,5){\circle*{.3}}
\put(7,5){\circle*{.3}}
\put(4,7){\circle*{.3}}
\put(4,1){\line(-3,2)3}
\put(4,1){\line(-1,2)1}
\put(4,1){\line(1,2)1}
\put(4,1){\line(3,2)3}
\put(4,7){\line(-3,-2)3}
\put(4,7){\line(-1,-2)1}
\put(4,7){\line(1,-2)1}
\put(4,7){\line(3,-2)3}
\put(1,3){\line(0,1)2}
\put(1,3){\line(1,1)2}
\put(1,3){\line(2,1)4}
\put(3,3){\line(-1,1)2}
\put(3,3){\line(0,1)2}
\put(3,3){\line(2,1)4}
\put(5,3){\line(-2,1)4}
\put(5,3){\line(0,1)2}
\put(5,3){\line(1,1)2}
\put(7,3){\line(-2,1)4}
\put(7,3){\line(-1,1)2}
\put(7,3){\line(0,1)2}
\put(3.85,.3){$0$}
\put(.35,2.85){$a$}
\put(2.35,2.85){$b$}
\put(5.4,2.85){$c$}
\put(7.4,2.85){$d$}
\put(.35,4.85){$d'$}
\put(2.35,4.85){$c'$}
\put(5.4,4.85){$b'$}
\put(7.4,4.85){$a'$}
\put(3.85,7.4){$1$}
\put(.9,-.75){{\rm Figure~3. Boolean poset}}
\end{picture}
\end{center}

\vspace*{3mm}

The table for the symmetric difference is as follows:
\newpage
\[
\begin{array}{l|llllllllll}
	+      & 0  & a    & b    & c    & d    & a'   & b'   & c'   & d'   & 1 \\
	\hline
	0      & 0  & a    & b    & c    & d    & a'   & b'   & c'   & d'   & 1 \\
	a      & a  & 0    & c'd' & b'd' & b'c' & 1    & a'b' & a'c' & a'd' & a' \\
	b      & b  & c'd' & 0    & a'd' & a'c' & a'b' & 1    & b'c' & b'd' & b' \\
	c      & c  & b'd' & a'd' & 0    & a'b' & a'c' & b'c' & 1    & c'd' & c' \\
	d      & d  & b'c' & a'c' & a'b' & 0    & a'd' & b'd' & c'd' & 1    & d' \\
	a'     & a' & 1    & a'b' & a'c' & a'd' & 0    & c'd' & b'd' & b'c' & a \\
	b'     & b' & a'b' & 1    & b'c' & b'd' & c'd' & 0    & a'd' & a'c' & b \\
	c'     & c' & a'c' & b'c' & 1    & c'd' & b'd' & a'd' & 0    & a'b' & c \\
	d'     & d' & a'd' & b'd' & c'd' & 1    & b'c' & a'c' & a'b' & 0    & d \\
	1      & 1  & a'   & b'   & c'   & d'   & a    & b    & c    & d    & 0
\end{array}
\]
\hspace*{71mm} {\rm Table~2} \\

\vspace*{-3mm}

From Table~2 one can see that for $x,y\notin\{0,1\}$, $x\ne y$ and $x\ne y'$ we have $|x+y|>1$. That this does not hold in every Boolean poset is demonstrated by the next example.
\begin{center}
\setlength{\unitlength}{7mm}
\begin{picture}(8,10)
\put(4,1){\circle*{.3}}
\put(1,3){\circle*{.3}}
\put(3,3){\circle*{.3}}
\put(5,3){\circle*{.3}}
\put(7,3){\circle*{.3}}
\put(1,7){\circle*{.3}}
\put(3,7){\circle*{.3}}
\put(5,7){\circle*{.3}}
\put(7,7){\circle*{.3}}
\put(4,9){\circle*{.3}}
\put(1,5){\circle*{.3}}
\put(7,5){\circle*{.3}}
\put(4,1){\line(-3,2)3}
\put(4,1){\line(-1,2)1}
\put(4,1){\line(1,2)1}
\put(4,1){\line(3,2)3}
\put(4,9){\line(-3,-2)3}
\put(4,9){\line(-1,-2)1}
\put(4,9){\line(1,-2)1}
\put(4,9){\line(3,-2)3}
\put(1,3){\line(0,1)4}
\put(1,3){\line(1,1)4}
\put(3,3){\line(-1,1)2}
\put(3,3){\line(1,1)4}
\put(5,3){\line(-1,1)4}
\put(5,3){\line(1,1)2}
\put(7,3){\line(-1,1)4}
\put(7,3){\line(0,1)4}
\put(1,5){\line(1,1)2}
\put(7,5){\line(-1,1)2}
\put(3.85,.3){$0$}
\put(.35,2.85){$a$}
\put(2.35,2.85){$b$}
\put(5.4,2.85){$c$}
\put(7.4,2.85){$d$}
\put(.35,4.85){$e$}
\put(7.4,4.85){$e'$}
\put(.35,6.85){$d'$}
\put(2.35,6.85){$c'$}
\put(5.4,6.85){$b'$}
\put(7.4,6.85){$a'$}
\put(3.85,9.4){$1$}
\put(.85,-.75){{\rm Figure~4. Boolean poset}}
\end{picture}
\end{center}

\vspace*{4mm}

The table for the symmetric difference is as follows:
\[
\begin{array}{l|llllllllllll}
	+      & 0  & a    & b    & c    & d    & e  & a'   & b'   & c'   & d'   & e' & 1 \\
	\hline
	0      & 0  & a    & b    & c    & d    & e  & a'   & b'   & c'   & d'   & e' & 1 \\
	a      & a  & 0    & e    & b'd' & b'c' & b  & 1    & e'   & a'c' & a'd' & b' & a' \\
	b      & b  & e    & 0    & a'd' & a'c' & a  & e'   & 1    & b'c' & b'd' & a' & b' \\
	c      & c  & b'd' & a'd' & 0    & e'   & d' & a'c' & b'c' & 1    & e    & d  & c'\\
	d      & d  & b'c' & a'c' & e'   & 0    & c' & a'd' & b'd' & e    & 1    & c  & d' \\
    e      & e  & b    & a    & d'   & c'   & 0  & b'   & a'   & d    & c    & 1  & e' \\
	a'     & a' & 1    & e'   & a'c' & a'd' & b' & 0    & e    & b'd' & b'c' & b  & a \\
	b'     & b' & e'   & 1    & b'c' & b'd' & a' & e    & 0    & a'd' & a'c' & a  & b \\
	c'     & c' & a'c' & b'c' & 1    & e    & d  & b'd' & a'd' & 0    & e'   & d' & c \\
	d'     & d' & a'd' & b'd' & e    & 1    & c  & b'c' & a'c' & e'   & 0    & c' & d \\
	e'     & e' & b'   & a'   & d    & c    & 1  & b    & a    & d'   & c'   & 0  & e \\
	1      & 1  & a'   & b'   & c'   & d'   & e' & a    & b    & c    & d    & e  & 0
\end{array}
\]
\hspace*{71mm} {\rm Table~3} \\

\vspace*{-3mm}

\begin{remark}
The symmetric difference in Boolean algebras is associative, i.e.\ it satisfies the identity $(x+y)+z\approx x+(y+z)$. Unfortunately, this is not the case in Boolean posets. E.g., from the previous table we obtain
\begin{align*}
(a+b)+c & =e+c=d'\ne\{a',d'\}=\Min U(b,c,0)= \\
        & =\Min U\big(\Max L(a',d'),\Max L(a,d)\big)=a+\{a',d'\}=a+(b+c).
\end{align*}
However, the example of Figure~3 satisfies the so-called {\em weak distributivity}, i.e.
\[
\Min U\big((x+1)y'\big)\approx xy'+y'.
\]
\end{remark}

It is elementary to check that in Boolean algebras $(B,\vee,\wedge,{}',0,1)$ we have
\[
x+y\approx(x\vee y)\wedge(x'\vee y').
\]
This can be easily proved by using distributivity. We show that a similar result holds also for Boolean posets.

\begin{theorem}\label{th1}
Let $(P,\le,{}',0,1)$ be a Boolean poset. Then
\[
x+y\approx\Min U\Big(\Max L\big(\Min U(x,y),\Min U(x',y')\big)\Big).
\]
\end{theorem}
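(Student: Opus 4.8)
The plan is to reduce both sides to the set of minimal elements of an upper cone and then, via Lemma~\ref{lem2}(iii), to collapse the whole claim to a single equality of upper cones which can be attacked with the distributive laws. The guiding model is the Boolean-algebra identity $(x'\wedge y)\vee(x\wedge y')=(x\vee y)\wedge(x'\vee y')$, whose proof rests on distributivity together with $x\wedge x'=0$; the poset argument below is the cone-theoretic version of that computation.

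First I would rewrite both sides as the minimal elements of an upper cone. Since $x+y=\Min U\big(\Max L(x',y),\Max L(x,y')\big)$, the identity $U(A\cup B)=U(A)\cap U(B)$ together with $U(\Max E)=U(E)$ from Lemma~\ref{lem2}(iv) gives $x+y=\Min U\big(L(x',y)\cup L(x,y')\big)$. Likewise, using $L(A\cup B)=L(A)\cap L(B)$ and both halves of Lemma~\ref{lem2}(iv) (namely $L(\Min E)=L(E)$ and $U(\Max E)=U(E)$), the right-hand side $\Min U\big(\Max L(\Min U(x,y),\Min U(x',y'))\big)$ collapses to $\Min U\big(LU(x,y)\cap LU(x',y')\big)$. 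Writing $C:=L(x',y)\cup L(x,y')$ and $D:=LU(x,y)\cap LU(x',y')$, the theorem reduces to $\Min U(C)=\Min U(D)$, and by Lemma~\ref{lem2}(iii) it suffices to prove $U(C)=U(D)$.

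The heart of the matter is to establish $D=LU(C)$; granting this, $U(D)=ULU(C)=U(C)$ by the Galois identity $ULU=U$, and we are done. To prove $D=LU(C)$, note first that $D=L(U(x,y))\cap L(U(x',y'))=L\big(U(x,y),U(x',y')\big)$. Applying the distributive law $L(U(x,y),A)\approx LU\big(L(x,A),L(y,A)\big)$ with $A:=U(x',y')$ gives $D=LU\big(L(x,U(x',y')),L(y,U(x',y'))\big)$. It remains to evaluate the two inner cones. For the first, the distributive law $L(U(x',y'),z)\approx LU\big(L(x',z),L(y',z)\big)$ with $z:=x$, combined with the complementation identity $L(x,x')=\{0\}$ and $0\in L(x,y')$, yields $L(x,U(x',y'))=LU\big(\{0\},L(x,y')\big)=LUL(x,y')=L(x,y')$; symmetrically $L(y,U(x',y'))=L(x',y)$. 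Substituting back, $D=LU\big(L(x,y')\cup L(x',y)\big)=LU(C)$, as required.

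The main obstacle is exactly this middle computation: recognizing that $D$ should be unravelled by treating one factor, say $U(x',y')$, as the set parameter $A$ in the distributive law, and then clearing the two cross terms $L(x,x')$ and $L(y,y')$ by complementation, precisely as $x\wedge x'=y\wedge y'=0$ is used in the lattice case. Everything else is routine bookkeeping with the Galois identities $ULU=U$, $LUL=L$ and the cone-union formulas $U(A\cup B)=U(A)\cap U(B)$ and $L(A\cup B)=L(A)\cap L(B)$, all of which hold in an arbitrary poset; distributivity and complementation are invoked only at the single step $D=LU(C)$.
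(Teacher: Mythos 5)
Your proof is correct and is essentially the paper's own argument run in mirror image: where the paper transforms the left-hand side $\Min U\big(L(x',y),L(x,y')\big)$ by the $U$-form distributive laws (with $L(x,y')$ as the set parameter), collapses the cross terms $U(x,x')$, $U(y,y')$ to $1$, and finishes with $ULU=U$, you transform the right-hand cone $D=L\big(U(x,y),U(x',y')\big)$ by the dual $L$-form laws (with $U(x',y')$ as the set parameter), collapse $L(x,x')$, $L(y,y')$ to $0$, and finish with $LUL=L$. The ingredients --- Max/Min-to-cone conversions via Lemma~\ref{lem2}, two applications of distributivity, complementation to kill the cross terms, and the Galois identities --- correspond one-to-one under order duality, your identity $D=LU(C)$ being just a tidy packaging of the paper's final step.
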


\begin{proof}
We compute
\begin{align*}
x+y & \approx\Min U\big(\Max L(x',y),\Max L(x,y')\big)\approx\Min U\big(L(x',y),L(x,y')\big)\approx \\
    & \approx\Min UL\Big(U\big(x',L(x,y')\big),U\big(y,L(x,y')\big)\Big) \\
    & \approx\Min UL\Big(UL\big(U(x',x),U(x',y')\big),UL\big(U(y,x),U(y,y')\big)\Big)\approx \\
    & \approx\Min UL\big(ULU(x',y'),ULU(y,x)\big)\approx\Min UL\big(U(x,y),U(x',y')\big)\approx \\
    & \approx\Min U\Big(\Max L\big(\Min U(x,y),\Min U(x',y')\big)\Big).
\end{align*}
\end{proof}

It is worth noticing that this result does not hold in complemented posets that are not distributive, e.g.\ in the complemented poset depicted in Figure~2 we have $b+c=0$, but
\begin{align*}
\Min U\Big(\Max L\big(\Min U(b,c),\Min U(b',c')\big)\Big) & =\Min U\big(\Max L(a',d',1)\big)=\Min U(b,c)= \\
                                                          & =\{a',d'\}.
\end{align*}
It is almost evident from Theorem~\ref{th1} that in a Boolean poset $(P,\le,{}',0,1)$ for elements $a,b$ of $P$ with $a\le b$ we have
\[
a+b=\Min U\big(\Max L(a',b)\big)=\Min UL(a',b).
\]	

Using a more restrictive property than distributivity, we can extend the result from Theorem~\ref{th1} from elements to subsets.

\begin{theorem}
Let $(P,\le,{}',0,1)$ be a complemented poset satisfying the identity
\[
U\big(L(A,B),C\big)\approx UL\big(U(A,C),U(B,C)\big)
\]
where $A,B,C$ run through all subsets of $P$, and let $A,B$ be non-empty subsets of $P$. Then
\[
A+B\approx\Min U\Big(\Max L\big(\Min U(A,B),\Min U(A',B')\big)\Big).
\]
\end{theorem}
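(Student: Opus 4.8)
The plan is to reproduce, at the level of subsets, the chain of equalities that proves Theorem~\ref{th1}, replacing the element-wise distributive law used there by the stronger subset identity assumed here and isolating the single place where the non-emptiness of $A$ and $B$ is needed. So I would start from the definition $A+B=\Min U\big(\Max L(A',B),\Max L(A,B')\big)$ and first discard the inner $\Max$: since $U(\Max S)=U(S)$ by Lemma~\ref{lem2}(iv) and $U(X,Y)=U(X)\cap U(Y)$, we get $\Min U\big(\Max L(A',B),\Max L(A,B')\big)=\Min U\big(L(A',B),L(A,B')\big)$.

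Next comes the heart of the argument, namely two applications of the assumed identity $U\big(L(A,B),C\big)\approx UL\big(U(A,C),U(B,C)\big)$. Applying it once to $U\big(L(A',B),L(A,B')\big)$, with the lower-cone slot filled by $L(A',B)$ and $C:=L(A,B')$, yields $UL\big(U(A',L(A,B')),U(B,L(A,B'))\big)$. Applying it a second time, now to each of $U\big(A',L(A,B')\big)=U\big(L(A,B'),A'\big)$ and $U\big(B,L(A,B')\big)=U\big(L(A,B'),B\big)$ with $L(A,B')$ as the lower-cone slot, produces the doubly nested expression $\Min UL\big(UL(U(A,A'),U(A',B')),UL(U(A,B),U(B,B'))\big)$, exactly mirroring the element-wise computation.

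The step where non-emptiness is essential is the evaluation of the complement factors: I claim $U(A,A')=\{1\}$ whenever $A\ne\emptyset$. Indeed, fixing any $a\in A$, every $z\in U(A,A')$ satisfies $z\ge a$ and $z\ge a'$, so $z\in U(a,a')=\{1\}$, and $1$ lies in $U(A,A')$ trivially; the same reasoning gives $U(B,B')=\{1\}$. Feeding these into the two expansions collapses the inner cones, since $UL\big(\{1\},U(A',B')\big)=ULU(A',B')$ and $UL\big(U(A,B),\{1\}\big)=ULU(A,B)$, leaving $\Min UL\big(ULU(A',B'),ULU(A,B)\big)$; the closure identity $ULU=U$ then reduces this to $\Min UL\big(U(A,B),U(A',B')\big)$. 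Finally I would match this with the stated right-hand side by invoking both halves of Lemma~\ref{lem2}(iv): from $L(\Min S)=L(S)$ one gets $L\big(\Min U(A,B),\Min U(A',B')\big)=L\big(U(A,B),U(A',B')\big)$, and from $U(\Max S)=U(S)$ one gets $\Min U\Big(\Max L\big(\Min U(A,B),\Min U(A',B')\big)\Big)=\Min UL\big(U(A,B),U(A',B')\big)$, completing the identification.

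I expect the main obstacle to be bookkeeping rather than anything conceptual: one must apply the subset distributivity with the correct substitution of \emph{set} arguments (it is used with $C$ ranging over a full lower cone, not over a single element), and one must verify that the complement and closure simplifications which were automatic for elements still go through for subsets. The only genuinely new ingredient is the non-emptiness hypothesis, which is exactly what forces $U(A,A')=U(B,B')=\{1\}$ and thereby makes the two distributive expansions collapse as in the element case.
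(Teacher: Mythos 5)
Your proposal is correct and follows essentially the same route as the paper's own proof: the identical chain of equalities from Theorem~\ref{th1} lifted to subsets, with the assumed subset identity applied twice (first with $C=L(A,B')$, then with $C=A'$ and $C=B$), the non-emptiness of $A$ and $B$ used exactly to force $U(A,A')=U(B,B')=\{1\}$, the closure law $ULU=U$, and Lemma~\ref{lem2}(iv) to pass between $\Max L$/$\Min U$ and $L$/$U$ at the ends. Your write-up is in fact more explicit than the paper's about where non-emptiness enters and which substitutions are made, but the underlying argument is the same.
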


\begin{proof}
We have
\begin{align*}	
A+B & \approx\Min U\big(L(A',B),L(A,B')\big)\approx\Min UL\Big(U\big(A',L(A,B')\big),U\big(B,L(A,B')\big)\Big)\approx \\
    & \approx\Min UL\Big(UL\big(U(A',A),U(A',B')\big),UL\big(U(B,A),U(B,B')\big)\Big)\approx \\
    & \approx\Min UL\big(ULU(A',B'),ULU(B,A)\big)\approx\Min UL\big(U(A,B),U(A',B')\big)\approx \\
    & \approx\Min U\Big(\Max L\big(\Min U(A,B),\Min U(A',B')\big)\Big).
\end{align*}
\end{proof}

As mentioned in the introduction, an important term operation in Boolean algebras is the so-called {\em Sheffer operation} (alias {\em Sheffer stroke}). It is usually denoted by $|$ and defined by $x|y:=x'\wedge y'$. We extend this concept to complemented posets.

\begin{definition}\label{def2}
A {\em Sheffer structure} is an ordered pair $(P,|)$ where $P$ is a non-empty set and $|$ is a binary operator on $P$ satisfying the following conditions {\rm(}we abbreviate $x|x$ by $x'$ for $x\in P$ and for subsets $A$ of $P$, $A|A$ by $A'${\rm)}:
\begin{enumerate}[{\rm(i)}]
\item $x|y\approx y|x$,
\item $(x|x)|(x|x)\approx x$,
\item $(x|y)|(x|x)\approx x$,
\item $\big(x'|(y'|z')'\big)'|z'\approx\big(x'|(y'|z')'\big)|(x|x')$,
\item $x|x'\in P$ and $x|x'\approx y|y'$,
\item $(x|x')'|x\approx x|x'$,
\item $z\in x|y$ if and only if $z'|x=z'|y=z$ and if $u'|x=u'|y=u$ and $z'|u'=z$ together imply $z=u$.
\end{enumerate}	
\end{definition}

The natural correspondence between a complemented poset and a Sheffer structure is described by the next two theorems.

\begin{theorem}
Let $(P,\le,{}',0,1)$ be a complemented poset satisfying the {\rm ACC} and the {\rm DCC} and define a binary operator $|$ on $P$ by $x|y:=\Max L(x',y')$ for all $x,y\in P$. Then $(P,|)$ is a Sheffer structure.
\end{theorem}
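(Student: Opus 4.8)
The plan is to reduce everything to the operator structure already constructed. Since a complemented poset is by definition bounded and we assume both ACC and DCC, Theorem~\ref{th2} applies and gives that $(P,\Min U,\Max L,0,1)$ is an operator structure; write $\sqcup:=\Min U$ and $\sqcap:=\Max L$. The definition $x\mid y=\Max L(x',y')$ then reads $x\mid y=x'\sqcap y'$, so the Sheffer operator is just $\sqcap$ precomposed with the complementation. Before touching the axioms I would record a short dictionary. First, $x\mid x=\Max L(x',x')=\Max L(x')=x'$, so the abbreviation $x':=x\mid x$ of Definition~\ref{def2} agrees with the given complement and the two meanings of $'$ coincide. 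Second, since $'$ is an antitone involution it is a dual order-isomorphism, whence $L(A)'=U(A')$ for every $A\subseteq P$, and by Lemma~\ref{lem2} it interchanges $\Max$ with $\Min$; concretely $(x\mid y)'=(\Max L(x',y'))'=\Min U(x,y)=x\sqcup y$, and this identity extends to subsets. Third, the complement law $L(x,x')=0$ yields $x\mid x'=\Max L(x,x')=0$, and dually $0'=1$, $1'=0$.

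With this dictionary the routine conditions of Definition~\ref{def2} follow from the operator-structure axioms of Definition~\ref{def1}. Condition~(i) is the symmetry $L(x',y')=L(y',x')$. For~(ii), $(x\mid x)\mid(x\mid x)=x'\mid x'=(x')'\sqcap(x')'=x\sqcap x=x$ by idempotency and the involution. For~(iii) I would rewrite, using the dictionary, $(x\mid y)\mid(x\mid x)=(x\mid y)\mid x'=(x\mid y)'\sqcap x=(x\sqcup y)\sqcap x$ and then apply the absorption law $x\sqcap(x\sqcup y)\approx x$ of Definition~\ref{def1}(v). Condition~(v) is immediate from $x\mid x'=0\in P$, and~(vi) unwinds to $(x\mid x')'\mid x=1\mid x=0\sqcap x'=\Max L(0,x')=0=x\mid x'$. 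For the equivalence in~(vii) I would spell out via the dictionary that $z'\mid x=z\sqcap x'$ and $z'\mid u'=z\sqcap u$, so that the three Sheffer equations $z'\mid x=z'\mid y=z$ and the implication clause translate respectively into $z\sqcap x'=z\sqcap y'=z$ and into the uniqueness clause for $\sqcap$; thus~(vii) is precisely Definition~\ref{def1}(vi) applied to $\sqcap$ at the pair $(x',y')$.

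The real work is condition~(iv), and this is where the failure of associativity of $\Max L$ must be dealt with. Because the intermediate values are genuinely set-valued, I cannot argue elementwise as in a lattice; instead I would reduce each claimed set-equality to an equality of cones by Lemma~\ref{lem2}(ii),(iii) and flatten the nested expressions using Lemma~\ref{lem2}(iv) (that is, $L(\Min A)=L(A)$ and $U(\Max A)=U(A)$). Concretely I would first evaluate the inner term $x'\mid(y'\mid z')'$ in the $\sqcup,\sqcap$-language through the dictionary, then apply the involution so as to turn the outermost Sheffer strokes on both sides into $\sqcap$'s and $\sqcup$'s, and finally invoke the operator-structure axiom Definition~\ref{def1}(iv); the hypotheses ACC and DCC are exactly what the remark after Lemma~\ref{lem2} needs to guarantee that all the $\Max L$ and $\Min U$ sets occurring are non-empty, so that these cone reductions are legitimate. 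I expect this flattening to be the main obstacle, since it is the step at which the set-valued, non-lattice nature of the operators forces one to manipulate cones rather than single infima and suprema.
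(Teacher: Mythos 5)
Your reduction of conditions (i), (ii), (iii), (v), (vi) and (vii) to the operator structure $(P,\Min U,\Max L,0,1)$ of Theorem~\ref{th2} is correct and is, in substance, the paper's own verification in different packaging: the paper checks these conditions directly from Lemma~\ref{lem2} (for instance, your appeal to Definition~\ref{def1}(v) for condition (iii) is exactly the paper's computation $\Max L\big(\Min U(a,b),a\big)=\Max L(a)=a$, already carried out once in the proof of Theorem~\ref{th2}), so your route buys a genuine economy by not repeating the cone and non-emptiness arguments. (One small inaccuracy: condition (vii) is not ``precisely'' Definition~\ref{def1}(vi) at $(x',y')$, since the side conditions $z\sqcup u=u$ and $z\sqcap u=z$ differ as written; they are equivalent in the concrete structure, both expressing $z\le u$.) The real problem is condition (iv): you correctly single it out as the only place where work is needed, and then you do not do that work --- what you give is a plan (``I would first evaluate\dots then apply\dots and finally invoke\dots''), so as submitted the proof of (iv) is missing.

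Moreover, executing your plan would force you to confront two issues you have not noticed. First, your dictionary entry ``$(x\mid y)'=\Min U(x,y)$, and this identity extends to subsets'' is false under the convention of Definition~\ref{def2}, which stipulates $A':=A\mid A$ for subsets: in the Boolean poset of Figure~3 one has $a\mid b=\{c,d\}$ and $(a\mid b)\mid(a\mid b)=\Max L(c',d')=\{a,b\}$, while $\Min U(a,b)=\{c',d'\}$; the identity holds only for the pointwise complement $\{a'\mid a\in A\}$, and the two notions agree on singletons but not on the set-valued subexpressions on which the primes of axiom (iv) actually sit. Second, with the pointwise reading, axiom (iv) as printed, $\big(x'\mid(y'\mid z')'\big)'\mid z'\approx\big(x'\mid(y'\mid z')'\big)\mid(x\mid x')$, translates to $\big(x\sqcap(y\sqcap z)\big)\sqcap z\approx\big(x\sqcap(y\sqcap z)\big)'\sqcap 1$, which is not an instance of Definition~\ref{def1}(iv) and is in fact false: in the four-element Boolean algebra $\{0,p,p',1\}$ with $x=y=z=p$ the left side is $p$ and the right side is $p'$. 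What the paper's proof actually establishes is the variant with a prime on the right-hand factor, $\big(x'\mid(y'\mid z')'\big)'\mid(x\mid x')$; for that variant your reduction does succeed, since both sides then translate to $\big(x\sqcap(y\sqcap z)\big)\sqcap z$ and $\big(x\sqcap(y\sqcap z)\big)\sqcap1$, and commutativity of the extended operators turns the desired identity into the instance $z\sqcap\big((z\sqcap y)\sqcap x\big)\approx\big((z\sqcap y)\sqcap x\big)\sqcap1$ of Definition~\ref{def1}(iv). So your approach is salvageable, and would then be genuinely shorter than the paper's direct computation, but the step you deferred is exactly the one where the interpretation of the primes and the exact form of the axiom decide between a correct proof and a false statement.
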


\begin{proof}
For subsets $A,B$ of $P$ we define $A|B:=\Max L(A',B')$. Let $a,b,c\in P$.
\begin{enumerate}[(i)]
\item $a|b=\Max L(a',b')=\Max L(b',a')=b|a$,
\item $a|a=\Max L(a',a')=a'$ and therefore the unary operator $'$ on the Sheffer structure restricted to $P$ coincides with the unary operation $'$ on the complemented poset,
\item $(a|b)|a'=\Max L\big(\Min U(a,b),a\big)=\Max L(a)=a$,
\item We have $a'|b'=\Max L(a,b)$ and $v\le u\le c$ for all $u\in\Max L(b,c)$ and all $v\in \Max L\big(a,\Max L(b,c)\big)$. Since $\Max L(b,c)\ne\emptyset$ according to the remark after Lemma~\ref{lem2}, we conclude $v\le c$ for all $v\in\Max L\big(a,\Max L(b,c)\big)$. Because of $\Max L\big(a,\Max L(b,c)\big)\ne\emptyset$ and due to the remark after Lemma~\ref{lem2} we obtain
\begin{align*}
\big(a'|(b'|c')'\big)'|c' & =\Max L\Big(\Max L\big(a,\Max L(b,c)\big),c\Big)= \\
                          & =\Max L\Big(\Max L\big(a,\Max L(b,c)\big)\Big)= \\
                          & =\Max L\Big(\Max L\big(a,\Max L(b,c)\big),\Max L(a',a)\Big)=\big(a'|(b'|c')'\big)'|(a|a').
\end{align*}
\item $a|a'=\Max L(a',a)=0=b|b'$,
\item $(a|a')'|a=0'|a=\Max L(0,a')=0=a|a'$,
\item It is easy to see that $a\le b$ if and only if $\Max L(a,b)=a$. Hence $c\in a|b$ if and only if $\Max L(c,a')=\Max L(c,b')=c$ and if $\Max L(d,a')=\Max L(d,b')=d$ and $\Max L(c,d)=c$ together imply $c=d$.
\end{enumerate}
\end{proof}

Now we show how a complemented poset together with the operators $\Max L$ and $\Min U$ can be received back from a Sheffer structure.

\begin{theorem}
Let $(P,|)$ be a Sheffer structure, put $x':=x|x$ for all $x\in P$, define a binary relation $\le$ on $P$ by $x\le y$ if $x'|y'=x$ and put $0:=x|x'$ and $1:=0'$ {\rm(}$x,y\in P${\rm)}. Then $\mathbf P:=(P,\le,{}',0,1)$ is a complemented poset, $x|y\approx\Max L(x',y')$ and $(x|y)'\approx\Min U(x,y)$.
\end{theorem}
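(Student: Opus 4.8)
The plan is to mimic the proof of Theorem~\ref{th3}: first check that $\le$ is a bounded partial order, then identify $|$ with $\Max L$ via condition~(vii), and finally read off the antitone involution, the complementation and the $\Min U$-representation from that identification. The one computation I would isolate at the outset is the translation rule coming from the definition of $\le$: putting $y:=q'$ in ``$x\le y$ iff $x'|y'\approx x$'' and using the involution~(ii) gives
\[
p'|q\approx p\quad\Longleftrightarrow\quad p\le q',
\]
so every equation involving $|$ can be rewritten as an order statement. Reflexivity of $\le$ is then $x'|x'\approx(x|x)|(x|x)\approx x$, which is~(ii); antisymmetry is the commutativity~(i). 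For the bounds, (vi) yields $0'|x\approx0$ for every $x$, hence $0\le x$, while the auxiliary identity $q|0\approx q'$ (a consequence of (iii) and (v)) gives $x'|0\approx x$, hence $x\le1:=0'$.

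Transitivity is where the opaque associativity-type axiom~(iv) enters. Starting from $a'|b'\approx a$ and $b'|c'\approx b$, I would substitute $(b'|c')'\approx b'$ so that the compound term $\big(a'|(b'|c')'\big)'$ collapses to $a'$; combining this with $x|x'\approx0$ from~(v) and $q|0\approx q'$ lets both sides of~(iv) reduce, the residue being $a'|c'\approx a$, i.e.\ $a\le c$. Once $(P,\le,0,1)$ is a bounded poset, the representation $x|y\approx\Max L(x',y')$ comes straight out of~(vii): by the translation rule the clause $z'|x\approx z'|y\approx z$ says $z\in L(x',y')$, while the maximality clause (reading $z'|u'\approx z$ as $z\le u$) says that no such $u$ lies strictly above $z$. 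Thus~(vii) is literally ``$z\in\Max L(x',y')$''. It is worth stressing that ACC and DCC are \emph{not} assumed here, so Lemma~\ref{lem2} is unavailable; both this step and the complementation below must be argued directly rather than through maximal/minimal elements supplied by the chain conditions.

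Antitonicity of $'$ then falls out of~(iii) alone. Given $a\le b$, apply~(iii) to the pair $(b',a')$: $(b'|a')|(b'|b')\approx b'$. Here $b'|b'\approx(b')'\approx b$ and $b'|a'\approx a'|b'\approx a$, so the identity reads $a|b\approx b'$, which by the definition of $\le$ is exactly $b'\le a'$. With $'$ an antitone involution, the De~Morgan passage is immediate: since $'$ is an order-reversing bijection it carries the maximal elements of $L(x',y')$ onto the minimal elements of $U(x,y)$, giving $(x|y)'\approx\big(\Max L(x',y')\big)'\approx\Min U(x,y)$. Finally, complementation: the representation gives $\Max L(x,x')\approx x|x'\approx0$, and since $0\le c$ for every $c\in L(x,x')$ while $0$ is maximal there, we must have $L(x,x')\approx0$; dually $\Min U(x,x')\approx(x|x')'\approx1$ forces $U(x,x')\approx1$. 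Hence $x'$ is a complement of $x$ and $\mathbf P$ is a complemented poset.

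I expect the transitivity step to be the main obstacle, both because~(iv) is the least transparent of the axioms and because the prime-cancellations there (through $x|x'\approx0$ and $q|0\approx q'$) are easy to get wrong. The rest is comparatively smooth: the representation is a direct transcription of~(vii), antitonicity needs only the short identity~(iii) rather than the full complementation machinery, and the two remaining claims are then routine De~Morgan translations.
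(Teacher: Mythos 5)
Your proposal is correct and takes essentially the same route as the paper's own proof: reflexivity and antisymmetry from (i)--(ii), transitivity by collapsing axiom (iv) under the hypotheses $a'|b'\approx a$ and $b'|c'\approx b$, the bounds from (iii), (v), (vi), the identification $x|y\approx\Max L(x',y')$ as a literal transcription of (vii), antitonicity of $'$ from (iii), and complementation from $x|x'\approx0$ together with minimality of the bottom element; your micro-variants (obtaining $x\le1$ from the auxiliary identity $q|0\approx q'$ rather than from antitonicity, and reducing the right side of (iv) via $q|0\approx q'$ instead of via (iii) directly) are immaterial. One shared caveat worth recording: both you and the paper's proof apply (iv) in the form $\big(x'|(y'|z')'\big)'|z'\approx\big(x'|(y'|z')'\big)'|(x|x')$, i.e.\ with a prime on the right-hand compound term, whereas Definition~\ref{def2}(iv) as printed omits that prime -- evidently a typo in the paper, since under the unprimed reading your reduction would yield $a'|c'\approx a'$ rather than $a$ and transitivity would fail.
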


\begin{proof}
We define $A':=A|A$ for all subsets $A$ of $P$. Let $a,b,c\in P$. We use the conditions of Definition~\ref{def2}. Observe that because of (v), $0$ is well-defined. By (ii), $'$ is an involution. Because of (ii), $\le$ is reflexive. If $a\le b$ and $b\le a$ then $a'|b'=a$ and $b'|a'=b$ and hence $a=b$ by (i). If $a\le b$ and $b\le c$ then $a'|b'=a$ and $b'|c'=b$ and hence by (i), (iii) and (iv)
\begin{align*} a'|c' & =(a'|b')'|c'=\big(a'|(b'|c')'\big)'|c'=\big(a'|(b'|c')'\big)'|(a|a')=(a'|b')'|(a|a')=a'|(a|a')= \\
	                 & =(a|a')|a'=a,
\end{align*}
i.e. $a\le c$. Therefore $(P,\le)$ is a poset. If $a\le b$ then $a'|b'=a$ and hence
\[
(b')'|(a')'=a|b=(a'|b')|b=(b'|a')|(b')'=b'
\]
according to (i) -- (iii), i.e.\ $b'\le a'$. Therefore, $'$ is antitone. Because of (i), (ii) and (vi) we have
\[
0'|a'=(a|a')'|a'=\big(a'|(a')'\big)|a'=a'|(a')'=a|a'=0,
\]
i.e.\ $0\le a$. Since $'$ is an antitone involution on $(P,\le)$ we conclude $a\le1$. This shows that $(P,\le,0,1)$ is a bounded poset. From (vii) we obtain $a|b=\Max L(a',b')$. Since $'$ is an antitone involution on $(P,\le)$ we obtain $(a|b)'=\Min U(a,b)$. Now $\Max L(a,a')=\Max L\big(a',(a')'\big)=a|a'=0$ and hence $L(a,a')=0$. Since $'$ is an antitone involution on $(P,\le)$ we get $U(a,a')=1$. This shows that $\mathbf P$ is a complemented poset.
\end{proof}

\section{Duals of Boolean posets}

It was established already by G.~Boole and formally described by G.~Birkhoff \cite B that Boolean algebras and unitary Boolean rings are in a natural one-to-one correspondence, in other words, unitary Boolean rings are structures being dual to Boolean algebras. One may ask whether there exists also a structure being dual to a Boolean poset. We construct such a structure by using the operators mentioned in the previous sections and we show that one can get back a Boolean poset from its dual.

We start with the following definition.

\begin{definition}\label{def3}
A {\em dual of a Boolean poset} is a structure $(P,+,\cdot,0,1)$ where $+$ and $\cdot$ are binary ``operators'' on $P$, i.e.\ they are defined also for subsets of $P$, and where $0,1\in P$ and the following conditions for all $x,y,z\in P$ and all $A,B\subseteq P$ are satisfied:
\begin{enumerate}[{\rm(i)}]
\item $x\cdot x=x$, $x\cdot y=y\cdot x$, $x\cdot0=0$, $x\cdot1=x$, $\big(x\cdot(y\cdot z)\big)\cdot z=\big(x\cdot(y\cdot z)\big)\cdot1$,
\item $(x+1)+1=x$,
\item $x\cdot y=x$ implies $(x+1)\cdot(y+1)=y+1$,
\item $x\cdot(x+1)=0$,
\item $\big((x+1)\cdot(y+1)+1\big)\cdot z=\big((x\cdot z+1)\cdot(y\cdot z+1)+1\big)\cdot1$,
\item $x\in A\cdot B$ if and only if $x\cdot y=x$ for all $y\in A\cup B$ and if $x\cdot z=x$ and $z\cdot y=z$ for all $y\in A\cup B$ together imply $z=x$.
\end{enumerate}	
\end{definition}

How a dual of a Boolean poset can be is derived from a given Boolean poset is illuminated by the next result.

\begin{theorem}
Let $\mathbf P=(P,\le,{}',0,1)$ be a Boolean poset satisfying the {\rm ACC} and the {\rm DCC} and define binary operators $+$ and $\cdot$ on $P$ by
\begin{align*}
     a+b & :=\Min U\big(\Max L(a',b),\Max L(a,b')\big), \\
a\cdot b & :=\Max L(a,b)	
\end{align*}
for all $a,b\in P$ and
\begin{align*}
     A+B & :=\Min U\big(\Max L(A',B),\Max L(A,B')\big), \\
A\cdot B & :=\Max L(A,B)	
\end{align*}
for all $A,B\subseteq P$. Then $\mathbb D(\mathbf P):=(P,+,\cdot,0,1)$ is a dual of a Boolean poset.
\end{theorem}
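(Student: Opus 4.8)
The plan is to verify the six conditions of Definition~\ref{def3} one by one, exploiting that $\cdot$ is nothing but the operator $\Max L$ and that, by Theorem~\ref{th2}, $(P,\Min U,\Max L,0,1)$ is already an operator structure, so the $\Max L$-part of the operator-structure axioms is available for free. Before starting I would record two preliminary facts. First, that $x+1\approx x'$: indeed $x+1=\Min U(\Max L(x',1),\Max L(x,0))=\Min U(x',0)=x'$, and more generally, using the involution identities $L(A')=\big(U(A)\big)'$ and $U(A')=\big(L(A)\big)'$, one simplifies $S+1$ for an arbitrary subset $S$. Second, that $x\cdot y=x$ is equivalent to $x\le y$, since $\Max L(x,y)=\{x\}$ forces $x\in L(x,y)$ and conversely $x\le y$ gives $L(x,y)=L(x)$.

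With these in hand the routine conditions fall out quickly. Condition (i): idempotency, commutativity, $x\cdot0=\Max L(0)=0$ and $x\cdot1=\Max L(x)=x$ are immediate, while for the last identity I would note that every element of $W:=x\cdot(y\cdot z)=\Max L\big(x,\Max L(y,z)\big)$ lies below $z$ (because the nonempty set $\Max L(y,z)$ consists of elements $\le z$), whence $L(W,z)=L(W)=L(W,1)$ and therefore $W\cdot z=W\cdot1$. Condition (ii) is just $(x')'=x$, i.e.\ the involution. Condition (iii) reads $x\le y\Rightarrow\Max L(x',y')=\{y'\}$, which holds because $'$ is antitone, so $x\le y$ gives $y'\le x'$. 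Condition (iv) is $\Max L(x,x')=\Max L(0)=0$, which uses only that $x'$ is a complement of $x$, i.e.\ $L(x,x')=0$.

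The main obstacle is condition (v), which is a distributive law in disguise: in a Boolean algebra it collapses to $(x\vee y)\wedge z\approx(x\wedge z)\vee(y\wedge z)$. My plan is to reduce both sides to a common expression, mimicking the computation in the proof of Theorem~\ref{th1}. Repeatedly using $x+1\approx x'$ together with $\Max L(x',y')=\big(\Min U(x,y)\big)'$, Lemma~\ref{lem2}(iv) in the forms $L(\Min A)=L(A)$ and $U(\Max A)=U(A)$, and the standard Galois identity $ULU=U$, I expect the left-hand side to simplify to $\Max L\big(\Min U(x,y),z\big)$ and the right-hand side to $\Max L\big(\Min U(\Max L(x,z),\Max L(y,z))\big)$. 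Applying $L(\Min A)=L(A)$ and $U(\Max A)=U(A)$ once more turns the inner lower cones of both expressions into $L\big(U(x,y),z\big)$ and $LU\big(L(x,z),L(y,z)\big)$, respectively, and these are equal by the distributivity identity $L\big(U(x,y),z\big)\approx LU\big(L(x,z),L(y,z)\big)$; hence both sides equal $\Max\big(LU(L(x,z),L(y,z))\big)$. The delicate points here are the bookkeeping of the set-valued intermediate results (the operators must be read in their extension to subsets) and the correct use of the involution to pass back and forth between $L$ and $U$.

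Finally, condition (vi) is essentially a reformulation of the definition of $\Max L$. Since $A\cdot B=\Max L(A,B)=\Max\big(L(A\cup B)\big)$ and, by the preliminary remark, $x\cdot y=x$ is equivalent to $x\le y$, the clause ``$x\cdot y=x$ for all $y\in A\cup B$'' says exactly that $x\in L(A,B)$, while the implication ``$x\cdot z=x$ and $z\cdot y=z$ for all $y\in A\cup B$ force $z=x$'' says exactly that $x$ is maximal in $L(A,B)$. Thus $x\in A\cdot B$ iff $x\in\Max L(A,B)$, which is what had to be shown. Assembling (i)--(vi) yields that $\mathbb D(\mathbf P)$ is a dual of a Boolean poset.
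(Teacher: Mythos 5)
Your proposal is correct and takes essentially the same route as the paper's own proof: both verify conditions (i)--(vi) of Definition~\ref{def3} directly, using $x+1\approx x'$, the equivalence $x\cdot y=x\Leftrightarrow x\le y$, the nonemptiness of the $\Max L$-sets (remark after Lemma~\ref{lem2}) for the quasi-associativity law in (i), the antitone involution together with Lemma~\ref{lem2}(iv) to pass between $L$ and $U$, and the distributivity identity $L\big(U(x,y),z\big)\approx LU\big(L(x,z),L(y,z)\big)$ for condition (v). The only cosmetic difference is that in (v) you reduce both sides to the common expression $\Max LU\big(L(x,z),L(y,z)\big)$, whereas the paper chains equalities from the left-hand side to the right-hand side; the substance is identical.
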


\begin{proof}
Let $a,b,c\in P$ and $A,B\subseteq P$.
\begin{enumerate}[(i)]
\item We have
\begin{align*}
a\cdot a & =\Max L(a,a)=a, \\
a\cdot b & =\Max L(a,b)=\Max L(b,a)=b\cdot a, \\
 a\cdot0 & =\Max L(a,0)=0, \\
 a\cdot1 & =\Max L(a,1)=a.	
\end{align*}
Moreover, $x\le y\le c$ for all $x\in \Max L\big(a,\Max L(b,c)\big)$ and all $y\in\Max L(b,c)$. Since $\Max L(b,c)\ne\emptyset$ according to the remark after Lemma~\ref{lem2} we conclude $x\le c$ for all $x\in \Max L\big(a,\Max L(b,c)\big)$ and hence
\begin{align*}
\big(a\cdot(b\cdot c)\big)\cdot c & =\Max L\Big(\Max L\big(a,\Max L(b,c)\big),c\Big)= \\
                                  & =\Max L\Big(\Max L\big(a,\Max L(b,c)\big)\Big)= \\
                                  & =\Max L\Big(\Max L\big(a,\Max L(b,c)\big),1\Big)=\big(a\cdot(b\cdot c)\big)\cdot1
\end{align*}
since $\Max L\big(a,\Max L(b,c)\big)\ne\emptyset$ due to the remark after Lemma~\ref{lem2}.
\item We have
\[
a+1=\Min U\big(L(a',1),L(a,1')\big)=\Min U(a',0)=a'
\]
and hence $(a+1)+1=(a')'=a$.
\item The following are equivalent: $a\cdot b=a$, $\Max L(a,b)=a$, $a\le b$. Now every of the following statements implies the next one: $a\cdot b=a$, $a\le b$, $b'\le a'$, $b'\cdot a'=b'$, $a'\cdot b'=b'$, $(a+1)\cdot(b+1)=b+1$.
\item We have $a\cdot(a+1)=\Max L(a,a')=0$.
\item We have $\Min U(A,B)=\big(\Max L(A',B')\big)'=(A'\cdot B')'$. Using distributivity of $\mathbf P$ we conclude
\begin{align*}
\big((a+1)\cdot(b+1)+1\big)\cdot c & =(a'\cdot b')'\cdot c=\Max L\big(\Min U(a,b),c\big)= \\
                                   & =\Max L\big(U(a,b),c\big)=\Max LU\big(L(a,c),L(b,c)\big)= \\
                                   & =\Max L\Big(U\big(L(a,c),L(b,c)\big),1\Big)= \\
                                   & =\Max L\Big(\Min U\big(\Max L(a,c),\Max L(b,c)\big),1\Big)= \\
                                   & =\big((a\cdot c)'\cdot(b\cdot c)'\big)'\cdot1.
\end{align*}
\item The following are equivalent: $a\in A\cdot B$, $a\in\Max L(A,B)$, $a\in L(A,B)$ and if $a\le b\in L(A,B)$ then $b=a$.
\end{enumerate}	
\end{proof}

For finite duals of Boolean posets we can prove the following.

\begin{theorem}
Let $\mathbf D=(P,+,\cdot,0,1)$ be a finite dual of a Boolean poset. Define a binary relation $\le$ on $P$ by $x\le y$ if $x\cdot y=x$, and a unary operation $'$ on $P$ by $x':=x+1$ {\rm(}$x,y\in P${\rm)}. Then $\mathbb B(\mathbf D):=(P,\le,{}',0,1)$ is a Boolean poset.
\end{theorem}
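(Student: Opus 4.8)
The plan is to follow the same template as the earlier converse constructions (the passage from an operator structure back to a poset, and from a Sheffer structure back to a complemented poset): first extract a bounded poset from the multiplicative operator $\cdot$, then read off the complementation from the unary operation $x\mapsto x+1$, and finally squeeze distributivity out of the mixed identity~(v) of Definition~\ref{def3}. Since $\mathbf D$ is finite it satisfies both the ACC and the DCC, so Lemma~\ref{lem2} and the remark following it are available throughout.

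First I would check that $\le$ is a partial order. Reflexivity is $x\cdot x=x$ and antisymmetry is commutativity of $\cdot$, both from~(i); for transitivity, if $x\cdot y=x$ and $y\cdot z=y$ then $x\cdot(y\cdot z)=x\cdot y=x$, so the ``associative'' clause of~(i) gives $x\cdot z=\big(x\cdot(y\cdot z)\big)\cdot z=\big(x\cdot(y\cdot z)\big)\cdot1=x\cdot1=x$, i.e.\ $x\le z$. The clauses $x\cdot0=0$ and $x\cdot1=x$ make $0$ the bottom and $1$ the top. Condition~(vi) then says precisely that $a\in A\cdot B$ iff $a\in L(A,B)$ and $a$ is maximal in $L(A,B)$, i.e.\ $A\cdot B=\Max L(A,B)$; in particular $x\cdot y=\Max L(x,y)$.

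Next I would handle $'$. By~(ii) it is an involution, and by~(iii) together with the equivalence $x\le y\Leftrightarrow x\cdot y=x$ it is antitone. To see it is a complementation, condition~(iv) gives $\Max L(x,x')=x\cdot x'=0$; since $P$ is finite, Lemma~\ref{lem2}(i) forces every element of $L(x,x')$ below the unique maximal element $0$, so $L(x,x')=\{0\}$, and dualising through the antitone involution (which carries lower cones to upper cones and $0$ to $1=0'$) yields $U(x,x')=\{1\}$. Hence $(P,\le,{}',0,1)$ is a complemented poset, and, again using that $'$ is an antitone involution, one gets the auxiliary formula $\Min U(A,B)=\big(\Max L(A',B')\big)'=(A'\cdot B')'$.

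The real work is distributivity, and this is where I expect the main obstacle. The idea is to decode~(v) into a statement about cones. Using $\cdot=\Max L$, $x+1=x'$, and the fact that on the antichains produced by $\cdot$ the operation $+1$ returns the complement (so that $\big(\Max L(u,v)\big)'=\Min U(u',v')$ and conversely), the left-hand side of~(v) collapses to $\Max L\big(U(x,y),z\big)$ and the right-hand side to $\Max LU\big(L(x,z),L(y,z)\big)$, where along the way one repeatedly invokes Lemma~\ref{lem2}(iv) (to replace $\Min U$ and $\Max L$ by $U$ and $L$ inside cones) and the standard cone identity $ULU=U$. Equating the two sides and applying Lemma~\ref{lem2}(ii) to strip the outer $\Max$ yields $L\big(U(x,y),z\big)=LU\big(L(x,z),L(y,z)\big)$, the first distributive law, so $\mathbb B(\mathbf D)$ is a Boolean poset. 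The delicate point I would flag is precisely the bookkeeping of $+1$ on \emph{non-singleton} subsets: one must justify that the abstract operator $+$, evaluated on the sets $\Max L(x',y')$ and $\Max L\big(\Min U(x',z'),\Min U(y',z')\big)$ arising inside~(v), indeed returns their $'$-complements, which is exactly the behaviour matching the forward computation and secured by Lemma~\ref{lem2}.
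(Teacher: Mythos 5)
Your proposal follows the paper's proof essentially step for step: the same derivation of the poset axioms from (i), the identification $A\cdot B=\Max L(A,B)$ from (vi), involution and antitonicity of $'$ from (ii)--(iii), complementation from (iv) plus finiteness, and distributivity obtained by decoding (v) into the chain $\Max L\big(U(x,y),z\big)=\cdots=\Max LU\big(L(x,z),L(y,z)\big)$ and then stripping the outer $\Max$ with Lemma~\ref{lem2}(ii). The ``delicate point'' you flag --- that the abstract operator $+1$ must act as elementwise complementation on the non-singleton subsets arising inside (v) --- is in fact passed over silently in the paper's own computation as well, so your treatment is faithful to (and, if anything, more candid than) the original argument.
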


\begin{proof}
Let $a,b,c\in P$ and $A,B\subseteq P$. We consider the conditions of Definition~\ref{def3}. Reflexivity of $\le$ follows from (i). If $a\le b$ and $b\le a$ then $a\cdot b=a$ and $b\cdot a=b$ and hence $a=a\cdot b=b\cdot a=b$ according to (i). If $a\le b$ and $b\le c$ then $a\cdot b=a$ and $b\cdot c=b$ and hence $a\cdot c=(a\cdot b)\cdot c=\big(a\cdot(b\cdot c)\big)\cdot c=\big(a\cdot(b\cdot c)\big)\cdot1=(a\cdot b)\cdot1=a\cdot1=a$ by (i) showing $a\le c$. This proves $(P,\le)$ to be a poset. Moreover, $(a')'=(a+1)+1=a$ by (ii). If $a\le b$ then $a\cdot b=a$ and hence $b'\cdot a'=a'\cdot b'=(a+1)\cdot(b+1)=b+1=b'$ by (i) and (iii) showing $b'\le a'$. Hence $'$ is an antitone involution on $(P,\le)$. Because of (vi) we have $A\cdot B=\Max L(A,B)$. Now $\Max L(a,a')=a\cdot a'=a\cdot(a+1)=0$ by (iv) which implies $L(a,a')=0$. Since $'$ is an antitone involution on $(P,\le)$, we conclude $U(a,a')=1$. This shows that $\mathbb B(\mathbf D)$ is a complemented poset. Since $\Max L(A,B)=A\cdot B$ and $'$ is an antitone involution on $(P,\le)$ we see that $\Min U(A,B)=\big(\Max L(A',B')\big)'=(A'\cdot B')'$. Finally, using (v) and Lemma~\ref{lem2} we conclude
\begin{align*}
\Max L\big(U(a,b),c\big) & =\Max L\big(\Min U(a,b),c\big)=\Max L\big((a'\cdot b')',c\big)=(a'\cdot b')'\cdot c= \\
		                & =\big((a\cdot c)'(b\cdot c)'\big)'\cdot1=\Max L\Big(\big((a\cdot c)'(b\cdot c)'\big)',1\Big)= \\
		                & =\Max L\Big(\big((a\cdot c)'(b\cdot c)'\big)'\Big)=\Max L\big(\Min U(a\cdot c,b\cdot c)\big)= \\
		                & =\Max LU(a\cdot c,b\cdot c)=\Max LU\big(\Max L(a,c),\Max L(b,c)\big)= \\
		                & =\Max LU\big(L(a,c),L(b,c)\big).
\end{align*}
Since $P$ is finite, $(P,\le)$ satisfies the ACC and the DCC. Applying Lemma~\ref{lem2} we conclude $L\big(U(a,b),c\big)=LU\big(L(a,c),L(b,c)\big)$.
\end{proof}

The next theorem shows that the correspondence between Boolean posets and their duals is nearly one-to-one.

\begin{theorem}
\
\begin{enumerate}[{\rm(i)}]
\item If $\mathbf P=(P,\le,{}',0,1)$ is a finite Boolean poset then $\mathbb B\big(\mathbb D(\mathbf P)\big)=\mathbf P$.
\item If $\mathbf D=(P,+,\cdot,0,1)$ is a finite dual of a Boolean poset and $\mathbb D\big(\mathbb B(\mathbf D)\big)=(P,\oplus,\odot,0,1)$ then $(P,\odot,0,1)=(P,\cdot,0,1)$.	
\end{enumerate}
\end{theorem}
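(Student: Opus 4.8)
The statement asserts a near-inverse relationship in both directions. For part (i), I need to show that starting from a finite Boolean poset $\mathbf P$, forming its dual $\mathbb D(\mathbf P)$, and then recovering a Boolean poset via $\mathbb B$ returns $\mathbf P$ exactly. Since a Boolean poset is determined by its underlying order $\le$ (the complementation $'$, the bounds $0,1$, and distributivity are all determined once the order is fixed, and the carrier set $P$ is unchanged throughout), it suffices to show that the order relation $\sqsubseteq$ of $\mathbb B\big(\mathbb D(\mathbf P)\big)$ coincides with the original order $\le$. I would unwind the definitions: in $\mathbb D(\mathbf P)$ the operator $\cdot$ is given by $a\cdot b=\Max L(a,b)$, and in $\mathbb B$ the recovered order is defined by $x\sqsubseteq y$ iff $x\cdot y=x$. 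So I must verify the chain of equivalences $a\sqsubseteq b$ iff $\Max L(a,b)=a$ iff $a\le b$, where the last step uses Lemma~\ref{lem2} (specifically that $\Max L(a,b)=a$ forces $L(a,b)=L(a)$, hence $a\in L(b)$, i.e.\ $a\le b$). The recovered complementation $x'=x+1$ equals the original $'$ by condition~(ii)/(iii) computations already established in the construction theorems, so the match is immediate once the orders agree.

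For part (ii), the claim is weaker: only the multiplicative reduct $(P,\odot,0,1)$ is required to coincide with $(P,\cdot,0,1)$, not the additive operator. Here I start from a finite dual $\mathbf D$, build the Boolean poset $\mathbb B(\mathbf D)=(P,\le,{}',0,1)$ where $x\le y$ iff $x\cdot y=x$, and then reconstruct the dual $\mathbb D\big(\mathbb B(\mathbf D)\big)=(P,\oplus,\odot,0,1)$ with $a\odot b=\Max L(a,b)$ (the $\Max L$ computed in the poset $\mathbb B(\mathbf D)$). The plan is to show $a\odot b=a\cdot b$ for all $a,b$. By condition~(vi) of Definition~\ref{def3}, the original $\cdot$ already satisfies $a\cdot b=\Max L(a,b)$ where this $\Max L$ is taken with respect to the order determined by $\cdot$ itself; and that order is precisely the $\le$ of $\mathbb B(\mathbf D)$. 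Hence both $\odot$ and $\cdot$ equal $\Max L(a,b)$ computed in the same poset, giving $a\odot b=a\cdot b$ directly.

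The key observation making both parts routine is that $\cdot$ is the ``meet-like'' operator and its recovery depends only on the induced order, which is stable under the round trip. The main obstacle, and the reason part~(ii) is stated only for the $\odot$-reduct, is that the additive operator $+$ is \emph{not} recovered: in a Boolean poset $x+y$ can be a multi-element set (as Tables~2 and~3 illustrate), and the reconstruction $\oplus$ built from $\Min U$ and $\Max L$ need not reproduce the original $+$ of an abstract dual, since Definition~\ref{def3} does not pin down $+$ on arbitrary pairs beyond what conditions~(ii)--(v) force. Thus the asymmetry between the two parts is essential, and the proof of~(ii) must be careful to claim agreement only on the $\cdot$-component. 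I would therefore present~(i) as a short equivalence-chain argument invoking Lemma~\ref{lem2}, and~(ii) as an even shorter identification of $\odot$ with $\cdot$ through condition~(vi), explicitly noting that the additive parts are not asserted to match.
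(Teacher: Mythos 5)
Your proposal is correct and follows essentially the same route as the paper: for (i) the chain of equivalences $a\sqsubseteq b$ iff $a\cdot b=a$ iff $\Max L(a,b)=a$ iff $a\le b$ together with the computation $a+1=a'$ (already established when constructing $\mathbb D(\mathbf P)$), and for (ii) the identification $A\odot B=\Max L(A,B)=A\cdot B$ via condition (vi) of Definition~\ref{def3}. Your side remark that the complementation is order-determined (uniqueness of complements in distributive posets) is true but unproven and unnecessary, since you anyway fall back on the direct computation $a^*=a+1=a'$, exactly as the paper does.
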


\begin{proof}
\
\begin{enumerate}[(i)]
\item Let $\mathbf P=(P,\le,{}',0,1)$ be a finite Boolean poset, $\mathbb D(\mathbf P)=(P,+,\cdot,0,1)$, $\mathbb B\big(\mathbb D(\mathbf P)\big)=(P,\sqsubseteq,{}^*,0,1)$ and $a,b\in P$. Then the following are equivalent: $a\sqsubseteq b$, $a\cdot b=a$, $\Max L(a,b)=a$, $a\le b$. Moreover,
\[
a^*=a+1=\Min U\big(L(a',1),L(a,1)\big)=\Min U(a',0)=a'.
\]
\item If $\mathbf D=(P,+,\cdot,0,1)$ is a finite dual of a Boolean poset, $\mathbb B(\mathbf D)=(P,\le,{}',0,1)$, $\mathbb D\big(\mathbb B(\mathbf D)\big)=(P,\oplus,\odot,0,1)$ and $A,B\subseteq P$ then $A\odot B=\Max L(A,B)=A\cdot B$ according to (vi) of Definition~\ref{def3}.
\end{enumerate}
\end{proof}








Authors' addresses:

Ivan Chajda \\
Palack\'y University Olomouc \\
Faculty of Science \\
Department of Algebra and Geometry \\
17.\ listopadu 12 \\
771 46 Olomouc \\
Czech Republic \\
ivan.chajda@upol.cz

Miroslav Kola\v r\'ik \\
Palack\'y University Olomouc \\
Faculty of Science \\
Department of Computer Science \\
17.\ listopadu 12 \\
771 46 Olomouc \\
Czech Republic \\
miroslav.kolarik@upol.cz

Helmut L\"anger \\
TU Wien \\
Faculty of Mathematics and Geoinformation \\
Institute of Discrete Mathematics and Geometry \\
Wiedner Hauptstra\ss e 8-10 \\
1040 Vienna \\
Austria, and \\
Palack\'y University Olomouc \\
Faculty of Science \\
Department of Algebra and Geometry \\
17.\ listopadu 12 \\
771 46 Olomouc \\
Czech Republic \\
helmut.laenger@tuwien.ac.at
\end{document}